\renewcommand{\epsilon}{\varepsilon}
\newcommand{\RR}{\mathbb{R}}
\newcommand{\CC}{\mathbb{C}}
\newcommand{\NN}{\mathbb{N}}    
\newcommand{\ZZ}{\mathbb{Z}}
\newcommand{\TT}{\mathbb{T}}
\newcommand{\ran}{\operatorname{Ran}}
\newtheorem{theorem}{Theorem}
\newtheorem*{lemma}{Lemma}
\newtheorem*{question}{Question}
\theoremstyle{definition}
\theoremstyle{remark}
\begin{document}
%
%
%
%
%
%
%
%
\title{Laplace-eigenfunctions on the torus with high vanishing order}
\author{Matthias T\"aufer}
\affil{Technische Universit\"at Dortmund, Fakult\"at f\"ur Mathematik, Germany}
\date{\vspace{-2em}}
\maketitle
\begin{abstract}
 We use the sum-of-squares theorem from number theory to construct eigenfunctions of the Laplacian on the $d$-dimensional torus, $d \geq 2$, which vanish to any prescribed order at some point. 
 These functions are then applied to provide a negative answer (in dimension $d \geq 2$) to a question in the context of quantitative unique continuation for spectral projectors of Schr\"odinger operators, asked by Egidi and Veseli\'c in~\cite{EgidiV-16}.
\end{abstract}


For a compact, connected manifold $X$ with $C^\infty$ Riemannian metric, the Laplace (or Laplace-Beltrami) operator $\Delta$ on $X$ is a non-positive, self-adjoint operator.
One says that $f: X \to \CC$ vanishes to order $N > 0$ at $x_0 \in X$ if
\[
 \limsup_{\delta \to 0} \frac{\sup_{y \in B_\delta(x_0)} \lvert f(x) \rvert}{\delta^N} < \infty.
\]
In~\cite{DonnellyF-88}, Donnelly and Feffermann proved that if $f$ is a nonzero eigenfunction of $-\Delta$ on $X$ with eigenvalue $\lambda$ it will vanish to order \emph{at most} $c \sqrt{\lambda}$ at any point $x \in X$ where $c$ is a constant that depends only on $X$.
%

A complementary question is: Given any vanishing order $N$, can we find eigenfunctions vanishing to order \emph{at least} $N$ at some point $x_0$?
In dimension $d = 1$, for instance if $X$ is an interval with its end points identified, this is not possible as soon as $N > 1$. 
This follows immediately from the fact that all nonzero eigenfunctions are of the form
$\alpha \sin (\omega x + y_0) + \beta \cos(\omega x + y_0)$ and vanish to order at most $1$ at any point.
However, in dimension $d \geq 2$ it is indeed possible to find eigenfunctions of high vanishing order as our first result, Theorem~\ref{thm:1}, shows.
Let us denote by $\TT^d = [- \pi, \pi)^d$ the $d$-dimensional torus, i.e. opposite sides of $[-\pi,\pi)^d$ are identified, and by $\Delta$ the Laplace operator on $\TT^d$.
\begin{theorem}
 \label{thm:1}
 Let $d \geq 2$. 
 For every $N > 0$ there is a nonzero function $f \in L^2(\TT^d)$ with $- \Delta f = \lambda f$ for some $\lambda > 0$ such that $f$ vanishes to order at least $N$ at $0$.
\end{theorem}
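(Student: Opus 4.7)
My plan is to construct $f$ as a superposition of plane waves whose wave vectors all lie on a common sphere in $\ZZ^d$, and to choose the coefficients so that enough derivatives vanish at the origin. For any $\lambda > 0$ and any family of coefficients $(c_k)_{k \in S_\lambda}$, where $S_\lambda := \{k \in \ZZ^d : \lvert k \rvert^2 = \lambda\}$, the function
\[
 f(x) := \sum_{k \in S_\lambda} c_k \, \euler^{\ii k \cdot x}
\]
is automatically an eigenfunction of $-\Delta$ on $\TT^d$ with eigenvalue $\lambda$, and it is nonzero as soon as some $c_k$ is nonzero (the exponentials $\euler^{\ii k\cdot x}$ are $L^2$-orthogonal). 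So everything reduces to arranging enough vanishing at $0$ for a suitable $\lambda$.

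By Taylor's theorem, $f$ vanishes to order at least $N$ at $0$ as soon as $\partial^\alpha f(0) = 0$ for every multi-index $\alpha \in \NN^d$ with $\lvert \alpha \rvert < N$, which in terms of the coefficients reads
\[
 \sum_{k \in S_\lambda} c_k \, k^\alpha = 0 \qquad \text{for all } \alpha \text{ with } \lvert \alpha \rvert < N.
\]
This is a homogeneous linear system with $\binom{N-1+d}{d}$ equations in $\lvert S_\lambda \rvert$ unknowns, and it admits a nontrivial solution whenever $\lvert S_\lambda \rvert > \binom{N-1+d}{d}$.

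The remaining point, and really the only content of the theorem, is to exhibit $\lambda$ whose representation number $r_d(\lambda) = \lvert S_\lambda \rvert$ as a sum of $d$ integer squares is arbitrarily large. This is where the sum-of-squares input advertised in the abstract enters. For $d = 2$ one can take, e.g., $\lambda = 5^m$: since $5 = 1^2 + 2^2$ and $\ZZ[\ii]$ is a unique factorization domain, the classical formula $r_2(5^m) = 4(m+1)$ shows that $r_2(\lambda)$ is unbounded along this sequence. For $d \geq 3$, embedding such two-dimensional representations via $k \mapsto (k, 0, \ldots, 0)$ already produces at least $4(m+1)$ lattice points on the sphere $\lvert k \rvert^2 = 5^m$ in $\ZZ^d$. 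Hence for each $N$ we can pick $m$ large enough that $\lvert S_{5^m}\rvert$ exceeds $\binom{N-1+d}{d}$, solve the linear system for coefficients $c_k$, and obtain the desired eigenfunction.

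The only step requiring care is the arithmetic one, namely the unboundedness of $r_d$; the linear-algebraic and Taylor-expansion steps are soft. I do not expect a genuine obstacle beyond keeping track of the combinatorial dimension count.
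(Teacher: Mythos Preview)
Your proposal is correct and follows essentially the same argument as the paper: write an eigenfunction as a finite linear combination of $\euler^{\ii k\cdot x}$ over the lattice points on a sphere, set up the homogeneous linear system from the vanishing of low-order Taylor coefficients, and invoke the sum-of-two-squares formula (with the explicit choice $\lambda = 5^m$) to make the number of unknowns exceed the number of equations. The only cosmetic differences are that you compute the number of equations explicitly as $\binom{N-1+d}{d}$ and justify $r_2(5^m)=4(m+1)$ via unique factorization in $\ZZ[\ii]$, whereas the paper cites Gauss's formula.
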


 By translation invariance on $\TT^d$, we may replace $0$ by any point $x_0 \in \TT^d$.

\begin{proof}
 For $\lambda \geq 0$ let 
 $
  I_\lambda
  :=
  \{
   k \in \ZZ^d \colon \lvert k \rvert^2 = \lambda
  \}
 $
 and for $k \in \ZZ^d$ let $\psi_k(x) := \exp (i k \cdot x) \in L^2(\TT^d)$.
%
 A function $f \in L^2(\TT^d)$ is an eigenfunction to the eigenvalue $\lambda \geq 0$ if and only if $f$ is of the form
 \[
  f = \sum_{k \in I_\lambda} \mu_k \psi_k, \quad \mu_k \in \CC.
 \]
 We expand the $\psi_k$ in a Taylor series around $0$
 \[
 \psi_k(x) 
 =
 \sum_{\alpha \in \NN_0^d} \frac{1}{\alpha!} (D^\alpha \psi_k)(0) \cdot x^\alpha
 =
 \sum_{\alpha \in \NN_0^d} \frac{(i k)^\alpha}{\alpha!} x^\alpha
\]
where we used multindex notation, i.e. given $\alpha = (\alpha_1, ..., \alpha_d) \in \NN_0^d$, we write
 $\alpha!  := \alpha_1 ! \cdot \dots \cdot \alpha_d!$, 
 $D^\alpha := \partial_{x_1}^{\alpha_1} \dots \partial_{x_d}^{\alpha_d}$,
 $k^\alpha := k_1^{\alpha_1} \cdot \dots \cdot k_d^{\alpha_d}$,
 $\lvert \alpha \rvert_1 := \alpha_1 + \dots + \alpha_d$.
Then, $f$ can be expressed as
\[
 f(x)
 =
 \sum_{k \in I_\lambda} \mu_k \psi_k
 =
 \sum_{\alpha \in \NN_0^d} \frac{(i x)^\alpha}{\alpha!} 
  \left(
   \sum_{k \in I_\lambda} \mu_k k^\alpha
  \right).
\]
Since the Taylor series is locally absolutely convergent, $f$ vanishes to order $N$ at $0$ if for all $\alpha \in \NN_0^d$ with $\lvert \alpha \rvert_1 \leq N$, we have
\[
 \sum_{k \in I_\lambda} \mu_k k^\alpha = 0.
\]
This is a system of finitely many linear equations, indexed by $\alpha$, with variables $\{ \mu_k \}_{k \in I_\lambda}$.
More precisely, we have
\begin{align*}
 \sharp \{ \text{equations} \} 
 &= 
 \sharp \{ \alpha \in \NN_0^d \colon \lvert \alpha \rvert_1 \leq N \}  =: C(N),
 \quad \text{\emph{fixed}, once we chose $N$},\\
 \sharp \{ \text{variables} \} 
 &=
 \sharp
 \{ k \in \ZZ^d \colon \lvert k \rvert^2 = \lambda \}.
\end{align*}
If $\sharp \{ \text{variables} \} > \sharp \{ \text{equations} \}$ then there will be a non-trivial solution $\{ \mu_k \}_{k \in I_\lambda}$. 
This will yield a function $f$ which vanishes to order $N$ at $0$.
Since $f$ is a nontrivial linear combination of orthogonal $\psi_k$, it is non-zero in $L^2(\TT^d)$ sense.

Thus, it remains to show that for every $C \in \NN$, there is $\lambda \geq 0$ such that $ \sharp
 \{ k \in \ZZ^d \colon \lvert k \rvert^2 = \lambda \} \geq C$.
Clearly, it suffices to establish this in dimension $d = 2$ and in this case, the task boils down to finding $\lambda \geq 0$ such that the number of all pairs $(x,y) \in \ZZ^2$ satisfying $x^2 + y^2 = \lambda$ exceeds $C$.
For $\lambda \in \NN$, this number is explicitly given by the so-called sum-of-squares theorem, sometimes also referred to as Gauss's formula, which can be found in~\cite{Gauss-1801}. 
See also \cite[Chapter 1]{Fricker-82} for a more modern reference.
The sum-of-squares theorem states that for $\lambda \in \NN$ with prime factor decomposition
 \[
  \lambda = p_1^{a_1} \cdot \dots \cdot p_k^{a_k} \cdot q_1^{b_1} \cdot \dots \cdot q_l^{b_l} \cdot 2^c,
 \]
 where $p_i$ are primes of the form $4k + 1$ and $q_i$ are primes of the form $4k+3$, the number of pairs $(x,y) \in \ZZ^2$ with $x^2 + y^2 = \lambda$ is
 \[
  \begin{cases}
   4 \cdot (1 + a_1) \cdot \dots \cdot (1 + a_n) 
   \quad
    & \text{if all $b_i$ are even},\\
   0
   & \text{else}.
  \end{cases}
 \]
 Choosing e.g. $\lambda = 5^C$, this implies
 \[
  \sharp \{ (x,y) \in \ZZ^2 \colon x^2 + y^2 = \lambda \} 
  =
  4 \cdot (1 + C)
  \geq 
  C.
  \qedhere
 \]
\end{proof}

An analogous argument works if the Laplacian on the torus is replaced by the Laplacian on a hypercube with Dirichlet or Neumann boundary conditions. 

Our motivation to study the vanishing order of eigenfunctions comes from quantitative unique continuation principles (UCPs) or observability estimates with explicit dependence on the geometry.
The above notion of vanishing to order $N$ at a point $x_0$ can be understood as vanishing with respect to the $\sup$ norm.
The UCPs we are interested in are then bounds on the vanishing order with respect to the $L^2$ norm, i.e. estimates of the form
\begin{equation}
 \label{eq:ucp}
 \int_{B_\delta} \lvert f \rvert^2 \geq \delta^M \int_{\TT^d} \lvert f \rvert^2,
 \quad
 0 < \delta \leq \pi
\end{equation}
where we set $B_\delta := B_\delta(0)$.
The following lemma clarifies the connection between functions of high vanishing order and counterexamples to~\eqref{eq:ucp}.
\begin{lemma}
 \label{lem:sup-L2}
 If $0 \neq f \in L^2(\TT^d)$ vanishes of order $N$ at $0$, then~\eqref{eq:ucp} cannot hold with $M = 2 N$.
\end{lemma}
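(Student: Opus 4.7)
The plan is to derive an $L^2$ bound on $\int_{B_\delta}|f|^2$ from the pointwise vanishing hypothesis and observe that it decays strictly faster than $\delta^{2N}$ as $\delta\to 0$, which then contradicts~\eqref{eq:ucp} with $M=2N$ for $\delta$ small.

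First I would unpack the definition of vanishing to order $N$: the condition $\limsup_{\delta\to 0}\delta^{-N}\sup_{B_\delta(0)}|f|<\infty$ means that there exist a constant $C<\infty$ and a threshold $\delta_0>0$ such that
\[
 \sup_{y\in B_\delta(0)}|f(y)| \leq C\,\delta^N \qquad \text{for all } 0<\delta\leq\delta_0.
\]
Combining this pointwise bound with the trivial volume estimate $|B_\delta|\leq \omega_d\,\delta^d$ (where $\omega_d$ denotes the volume of the unit ball in $\RR^d$), I would conclude
\[
 \int_{B_\delta}|f|^2 \leq |B_\delta|\cdot\sup_{B_\delta}|f|^2 \leq \omega_d C^2\,\delta^{d+2N}
 \qquad \text{for all } 0<\delta\leq\delta_0.
\]

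Now suppose, for contradiction, that~\eqref{eq:ucp} held with $M=2N$ for all $0<\delta\leq\pi$. Restricting to $\delta\leq\delta_0$ and combining with the previous bound would give
\[
 \delta^{2N}\int_{\TT^d}|f|^2 \;\leq\; \int_{B_\delta}|f|^2 \;\leq\; \omega_d C^2\,\delta^{d+2N},
\]
i.e.\ $\|f\|_{L^2(\TT^d)}^2 \leq \omega_d C^2\,\delta^d$. Since $d\geq 1$, letting $\delta\to 0$ forces $\|f\|_{L^2(\TT^d)}=0$, contradicting $f\neq 0$.

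There is no real obstacle here: the argument is just the observation that a pointwise power-law vanishing of order $N$ translates, via the factor $|B_\delta|\sim\delta^d$, into an $L^2$ vanishing of order strictly greater than $2N$ on small balls, which is incompatible with the scaling $\delta^{2N}$ demanded by~\eqref{eq:ucp}. The only point worth flagging explicitly in the write-up is that the $\limsup$ formulation of vanishing order automatically yields a uniform constant $C$ on some interval $(0,\delta_0]$, which is what makes the estimate above usable.
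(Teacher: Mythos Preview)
Your proof is correct and follows essentially the same approach as the paper's. Both arguments bound $\int_{B_\delta}|f|^2$ by $|B_\delta|\cdot(\sup_{B_\delta}|f|)^2$ and use that the resulting expression is $o(\delta^{2N})$ as $\delta\to 0$; the paper phrases this as a single limit computation (calling the trivial bound ``H\"older's inequality''), while you spell out explicit constants and frame it as a contradiction, but the content is identical.
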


\begin{proof}
 We estimate by Hoelder's inequality
 \[
 \lim_{\delta \to 0}
  \frac{\int_{B_\delta} \lvert f \rvert^2}{\delta^{2 N}}
  \leq
 \lim_{\delta \to 0}
  \operatorname{Vol}(B_\delta) \cdot \left( \frac{\sup_{B_\delta} \lvert f \rvert}{\delta^N} \right)^2
  =
  0
 \] 
 where we used that the second term on the right hand side remains bounded as $\delta \to 0$.
 Thus, Ineq.~\eqref{eq:ucp} cannot hold for $M = 2 N$.
\end{proof}

Recently, UCPs as in~\eqref{eq:ucp} have been studied extensively and have lead to several applications in the field of random Schr\"odinger operators (Wegner estimates and localization for non-ergodic random Schr\"odinger operators~\cite{Rojas-MolinaV-13,Klein-13}, Wegner estimates for the random breather model~\cite{TaeuferV-15}, Wegner estimates and initial scale estimates for models with non-linear dependence on the randomness~\cite{NakicTTV}) as well as in control theory~\cite{NakicTTV}.
In the process, Ineq.~\eqref{eq:ucp} has been generalized to larger and larger classes of functions.
More precisely, let $E \geq 0$ and $V \in L^\infty(\TT^d)$.
Then it has been proved that Ineq.~\eqref{eq:ucp} holds with a uniform $M$
\begin{enumerate}[i)]
 \item 
 for all $f$ which are eigenfunctions of $-\Delta + V$ with eigenvalue $\lambda \leq E$, cf.~\cite{Rojas-MolinaV-13},
 \item
 for all $f \in \ran \chi_{[\lambda - \epsilon, \lambda]}(- \Delta + V)$, the range of the spectral projector onto the interval $[\lambda - \epsilon, \lambda]$ for $\lambda \leq E$ where $\epsilon > 0$ is a $\delta$-dependent parameter, cf.~\cite{Klein-13},
 \item
 for all $f \in \ran \chi_{(- \infty, E]}(-\Delta + V)$, cf. \cite{NakicTTV-15, NakicTTV},
 \item
 for all $f \in L^2(\TT^d)$ which are (finite or infinite) linear combinations of $-\Delta + V$-eigenfunctions the coefficients of which satisfy a certain $E$-dependent exponential decay condition, cf.~\cite{TaeuferT-17}.
\end{enumerate}
Furthermore, if $V \equiv 0$, Ineq.~\eqref{eq:ucp} has been proved with a uniform $M$
\begin{enumerate}[i)]
 \setcounter{enumi}{4}
 \item
 for all $f \in L^2(\TT^d)$ whose Fourier Transform $\hat f \in \ell^2(\ZZ^d)$ has support on sites contained in a (not necessarily centered) cube of side length $E$, cf.~\cite{EgidiV-16},
 \item
 for all $f \in L^2(\TT^d)$ whose Fourier Transform $\hat f \in \ell^2(\ZZ^d)$ has support on sides contained in a union of a fixed number of cubes of side lengths $E$, cf.~\cite{EgidiV-16}.
\end{enumerate}
Let us emphasize that we only cited very special cases here:
All of the results i) to vi) have actually been stated (and the constant $M$ holds uniformly) in a much more general setting, namely in a multiscale setting where $\TT^d$ is replaced by an infinite family of hypercubes and where $B_\delta$ is replaced by a family of equidistributed arrangements of balls.
Furthermore, in i) to iv), the constant $M$ depends on $V$ only via $\lVert V \rVert_\infty$ whence it is uniform over a large class of potentials $V$.
Even though this constituted a large part of the originality of these results and was crucial for the applications envisioned therein, we omitted these aspects here for the sake of a simpler presentation and refer to~\cite{NakicTTV} for a more comprehensive discussion.

One interesting part about the results v) and vi) is that they allow for linear combinations of eigenfunctions of arbitrarily high eigenvalues - with some restrictions on their Fourier transforms.
This lead Egidi and Veseli\'c to a question, a special case of which we cite here.
\begin{question}[Special case of~{\cite[Question 3]{EgidiV-16}}]
 \label{question:EgidiV-16}
 Let $V \in L^\infty(\TT^d)$ and fix $w \in (0, \infty)$.
 Is there a constant $M$, which may depend on $w$ and $V$, such that for all $E_0 \in \RR$, all $0 < \delta < \pi$, and all $f \in \ran \chi_{[E_0 - w, E_0]}(-\Delta + V)$, 
 the estimate
 \[
  \int_{B_\delta} \lvert f \rvert^2 \geq \delta^M \int_{\TT^d} \lvert f \rvert^2.
 \]
 holds true?
\end{question}
Again, the authors in~\cite{EgidiV-16} asked the question in a more general geometric setting but since this is not relevant for our reasoning, we omit it here.
Furthermore, they suggested that the answer might depend on the dimension and/or regularity properties of $V$. 

\begin{theorem}
 The answer to Question~3 in~\cite{EgidiV-16} is no in dimension $d \geq 2$.
\end{theorem}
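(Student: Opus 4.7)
The plan is to combine Theorem~\ref{thm:1} with Lemma~\ref{lem:sup-L2} in the case $V \equiv 0$. I would fix any $w \in (0, \infty)$ and argue by contradiction, assuming that some constant $M > 0$ makes the proposed inequality hold for every $E_0 \in \RR$, every $0 < \delta < \pi$, and every $f \in \ran \chi_{[E_0 - w, E_0]}(-\Delta)$.

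Given such $M$, I would then pick an integer $N$ with $2N \geq M$ and invoke Theorem~\ref{thm:1} to obtain a nonzero eigenfunction $f \in L^2(\TT^d)$ of $-\Delta$, with some eigenvalue $\lambda > 0$, that vanishes to order at least $N$ at the origin. Choosing $E_0 := \lambda$ places $f$ inside $\ran \chi_{\{\lambda\}}(-\Delta) \subseteq \ran \chi_{[\lambda - w, \lambda]}(-\Delta)$, so it qualifies as an admissible test function for the assumed bound.

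The conclusion then follows from Lemma~\ref{lem:sup-L2}, which rules out \eqref{eq:ucp} for this $f$ with exponent $2N$, i.e.\ the estimate breaks down on a sequence of arbitrarily small $\delta$. Restricting to a witness $\delta < 1$ and using $M \leq 2N$ gives $\delta^M \geq \delta^{2N}$, so the inequality fails with exponent $M$ as well, contradicting the assumption and forcing the negative answer.

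The only point that might look like an obstacle is that the eigenvalues $\lambda$ produced by Theorem~\ref{thm:1} are forced to grow with $N$ (the $\sqrt{\lambda}$-sphere must carry many lattice points), so the counterexample is realised at arbitrarily high energy $E_0 = \lambda$. This is harmless under Question~\ref{question:EgidiV-16} because $E_0$ is unconstrained; moreover, it clarifies why the uniform bounds of items i)--iv), where the admissible spectral window is bounded above, are not contradicted by the present negative answer.
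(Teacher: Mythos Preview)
Your proposal is correct and follows essentially the same approach as the paper: take $V\equiv 0$, use Theorem~\ref{thm:1} to produce an eigenfunction of arbitrarily high vanishing order, place it in a spectral window $[\lambda-w,\lambda]$, and invoke Lemma~\ref{lem:sup-L2} to contradict the putative uniform $M$. Your contradiction framing and the explicit monotonicity step $\delta^M\geq\delta^{2N}$ for $\delta<1$ merely spell out what the paper leaves implicit.
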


\begin{proof}
 It suffices to consider $V = 0$, i.e. the case of the pure Laplacian.
 By the above lemma, it suffices to find for every $N \geq 0$ a function $f \in \ran \chi_{[E_0 - \omega, E_0]}$ for some $E_0 \in \RR$ that vanishes to order $N$ at $0$.
 Eigenfunctions are definitely in some $\ran \chi_{[E_0 - \omega, E_0]}$ and by Theorem~\ref{thm:1}, we find an eigenfunction $f$, vanishing to order $N$ at $0$.
\end{proof}


\section*{Acknowledgments}
 The author thanks Tam\'as Korodi for bringing the sum-of-squares theorem to his attention.


\end{document}